\title{A Generalization of Brown's Construction for the Degree/Diameter Problem}
\author{Yawara ISHIDA}
\newtheorem{Def}{Definition}
\newtheorem{Lem}{Lemma}
\newtheorem{Cor}[Lem]{Corollary}
\newtheorem*{Th}{Theorem}
\newcommand{\Z}{\mathbb Z}
\newcommand{\N}{\mathbb N}
\newcommand{\B}{\mathrm{B}}
\begin{document}
\maketitle
\begin{abstract}
The degree/diameter problem is the problem of finding the largest possible number of vertices $n_{\Delta,D}$ in a graph of given degree $\Delta$ and diameter $D$.
We consider the problem for the case of diameter $D=2$.
William G Brown gave a lower bound of the order of $(\Delta,2)$-graph.
In this paper, we give a generalization of his construction and improve the lower bounds for the case of $\Delta=306$ and $\Delta=307$.
One is $(306,2)$-graph with $88723$ vertices, the other is $(307,2)$-graph with $88724$ vertices.
\end{abstract}

\section{Introduction}

The {\it degree/diameter problem} is the problem of finding the largest possible number  $n_{\Delta,D}$ of vertices in a graph of given degree $\Delta$ and diameter $D$~\cite{MilSir2005, brown1966graphs}. 
Let $G$ be a graph with degree $\Delta$ ( $\Delta > 2$ ) and diameter $D$, then we have 
\[ |G| \leq n_{\Delta,D} \leq 1 + \Delta \sum_{k=1}^{D-1} (\Delta - 1)^k\]
where $|G|$ is the number of vertices of $G$. 
The right hand side of the above inequation is called {\it Moore bound}.
On the other hand, a lower bound of $n_{\Delta,D}$ for small degree and small diameter are available at \url{http://combinatoricswiki.org}. 
Especially for case of $D=2$ and large degree, there exists the general construction that gives a lower bound of $n_{\Delta,2}$, which is called {\it Brown's construction}~\cite{MilSir2005}.
Let $F_q$ be the finite field, where $q$ is a power of a prime.
Brown's construction gives the graph $\B(F_q)$ whose vertices are lines in $F_q^3$ and two lines are adjacent if and only if they are orthogonal. 
It follows that

\[|\B(F_q)| = q^2+q+1, \quad
\Delta(\B(F_q)) = q+1, \quad
D(\B(F_q))=2.
\]
The degree of each vertex of $\B(F_q)$ is $q+1$ or $q$. 
Among $q^2+q+1$ vertices, $q+1$ vertices are of degree $q$ and $q^2$ vertices are of degree $q+1$.
If $q$ is a power of $2$, there exists $(q+1,2)$-graph with $q^2+q+2$ vertices~\cite{journals/networks/ErdosFH80}.

In this paper, we generalize Brown's construction by replacing a field with a commutative ring, and search new records of the degree/diameter problem.

\section{Generalized Brown's Construction}
We give some definitions for generalized Brown's construction.
A {\it graph} $G=(V,E)$ consists of a set $V$ of {\it vertices} and a set $E \subset \{(v,w) \in V^2 | v \neq w \}$ of {\it edges}.
If $(v,w)$ is in $E$, it is said that $v$ and $w$ are {\it adjacent}, which is denoted by $v \sim w$.
The {\it order} $|G|$ of the graph is the number of vertices. 
The {\it neighbours} $N(v)$ of the vertex $v$ is a set of vertices which are adjacent to $v$.
The {\it degree} $\delta(v)$ of the vertex $v$ is the number of neighbours $| N(v) |$.  
The {\it degree} $\Delta(G)$ of the graph $G$ is the maximum degree of vertices, namely $\Delta(G)=\max\{\delta(v)|v\in V\}$.
The {\it distance} of the pair $(v,w)$ of vertices is the shortest path length between $v$ and $w$. 
The {\it diameter} $D(G)$ of the graph is the  maximum distance of all pairs of vertices.

Let $R$ be a commutative ring with unity. 
$R^*$ denotes the set of invertible elements of $R$.
$R^3$ is naturally seen as $R$-module. 
The addition and $R$-action are defined by coordinate-wise.
The {\it inner product} $\cdot: R^3 \times R^3 \Rightarrow R$ is defined as follows:
\[ (v_1,v_2,v_3) \cdot (w_1,w_2,w_3) = v_1 w_1 + v_2 w_2 + v_3 w_3 .\]
${\bm v}$ and ${\bm w}$ are {\it orthogonal} if and only if the inner product vanishes, namely ${\bm v} \cdot {\bm w} = 0$.
The {\it cross product} $\times: R^3 \times R^3 \Rightarrow R$ is defined as follows:
\[ (v_1,v_2,v_3) \cdot (w_1,w_2,w_3) = ( v_2 w_3 - v_3 w_2, v_3 w_1 - v_1 w_3, v_1 w_2 - v_2 w_1 ). \]

\begin{Def}
Let $R$ be a commutative ring with unity. The vertex set $V$ of the graph $\B(R)$ is 
\[ V = ( R^3 \setminus \{\bm v | \exists r \in R, r \cdot {\bm v} = {\bm 0} \} ) / \sim\]
where $\bm v \sim \bm w$ if and only if there exists $k \in R^*$ such that $k \cdot {\bm v} = {\bm w}$. The two vertices $[\bm v]$ and $[\bm w]$ are adjacent if and only if ${\bm v} \cdot {\bm w} = 0$.
\end{Def}

The definition of adjacency above is well-defined because the orthogonality does not depend on the selection of representatives. We call the above construction of a graph from a ring {\it generalized Brown's construction}. It is clear that the new construction coincides with Brown's one when the ring $R$ is a field. 

\begin{Lem}\label{Lem:regular}
Let $E$ be a Euclidean domain and $u$ be a prime element in $E$. 
If $E/(u^k)$ is a finite ring, then the degree of each vertex of $\B(E/(u^k))$ is $\Delta$ or $\Delta-1$, 
where $(u^k)$ is the principal ideal generated by $u^k$. 
\end{Lem}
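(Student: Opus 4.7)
The plan is to exploit the finite-local-chain-ring structure of $R = E/(u^k)$: its unique maximal ideal is $\mathfrak{m} = (u)/(u^k)$, the units $R^*$ are precisely the elements not in $\mathfrak{m}$, and writing $q = |E/(u)|$ one has $|R| = q^k$ and $|R^*| = q^{k-1}(q-1)$.

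First I would identify the set of admissible representatives. In $R$ every non-unit is an associate of some $u^j$ with $1 \le j \le k$, and a routine analysis of annihilators shows that $\bm v \in R^3$ is annihilated by a non-zero $r \in R$ iff every coordinate of $\bm v$ lies in $\mathfrak{m}$. So admissible representatives form $R^3 \setminus \mathfrak{m}^3$, and on this set $R^*$ acts freely by scalar multiplication, since any vector with at least one unit coordinate has trivial stabiliser.

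Next I would count the neighbours of a fixed vertex $[\bm v]$. After permuting coordinates, assume $v_1 \in R^*$. The orthogonality equation $v_1 w_1 + v_2 w_2 + v_3 w_3 = 0$ uniquely determines $w_1$ from $(w_2, w_3) \in R^2$, giving $q^{2k}$ vectors $\bm w$ orthogonal to $\bm v$. Of these, the ones with $\bm w \in \mathfrak{m}^3$ are exactly those with $w_2, w_3 \in \mathfrak{m}$ (which forces $w_1 \in \mathfrak{m}$), accounting for $q^{2k-2}$ bad vectors. Dividing the remaining $q^{2k-2}(q^2-1)$ by $|R^*| = q^{k-1}(q-1)$ yields exactly $\Delta := q^{k-1}(q+1)$ equivalence classes $[\bm w]$ with $\bm v \cdot \bm w = 0$.

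Finally, since the definition of edges excludes loops, $[\bm v]$ must be excluded from its own orthogonal set precisely when $\bm v \cdot \bm v = 0$. Hence $\delta([\bm v]) = \Delta$ when $\bm v$ is not self-orthogonal and $\Delta - 1$ otherwise, proving the claim. The step I expect to be most delicate is the annihilator characterization that pins down $R^3 \setminus \mathfrak{m}^3$ as the admissible representatives, since the clause $\exists r \in R$ in the definition must be interpreted as $\exists$ non-zero $r$, and the chain-ring structure of $R$ is essential for ruling out the possibility that a single non-unit coordinate already produces a non-trivial annihilator.
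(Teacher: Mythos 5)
Your proof is correct, but it takes a genuinely different route from the paper's. The paper argues by symmetry: after establishing (as you do) that every admissible representative has at least one unit coordinate, it builds an invertible matrix $U$ carrying $(1,0,0)$ to $\bm v$ and uses $[\bm w]\mapsto[{}^t U^{-1}\bm w]$ as a bijection between the vectors orthogonal to $(1,0,0)$ and those orthogonal to $\bm v$; the value of $\Delta$ is never computed in the lemma and is only worked out later, in the theorem, for $R=\Z_{p^k}$. You instead count directly: solving the orthogonality equation for the unit-coordinate slot gives $q^{2k}$ solutions, removing the $q^{2k-2}$ inadmissible ones and quotienting by the free $R^*$-action yields $\Delta=q^{k-1}(q+1)$ for every vertex at once. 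Both proofs hinge on the same two observations --- the characterization of admissible representatives via the local structure of $E/(u^k)$, and the self-orthogonality correction that accounts for the $\Delta-1$ case --- but your version buys an explicit formula for $\Delta$ valid for any finite $E/(u^k)$ (subsuming the degree computation in the paper's theorem), at the cost of needing $|R^*|=q^{k-1}(q-1)$ and the freeness of the scalar action, while the paper's bijection argument proves exactly the stated equality of degrees with no counting at all. Your closing remark about reading the definition's $\exists r\in R$ as $\exists r\neq 0$ is well taken; the paper glosses over this (and in fact its annihilating scalar should be $[u^{k-\min(l,m,n)}]$ rather than $[u^{\min(l,m,n)}]$), so your annihilator analysis is, if anything, more careful than the original.
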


\begin{proof}
It is clear that the degrees of vertices represented by $(1,0,0), (0,1,0), (0,0,1)$ are the same.
Let ${\bm v} = ([a],[b],[c])$ be a representative of any vertex where $a,b,c \in E$. 
If any element of $[a],[b],[c]$ is not invertible in $E/(u^k)$, 
there exist natural numbers $1 \leq l,m,n < k$ and some elements $a',b',c'$ in $E$ such that $a=u^l a', b=u^m b', c=u^n c'$. ${\bm v}$ is not a representative of vertices because the equation  $[u^{min(l,m,n)}] \cdot {\bm v} = ([0],[0],[0])$ holds. 
This is a contradiction.
Therefore, at least one of $[a],[b],[c]$ is invertible.
If $[a]$ is invertible, there exists one-to-one correspondence $\overline{U}: N([(1,0,0)])  \rightarrow N([{\bm v}])$ such that for all $[{\bm w}] \in N([1,0,0])$, $\overline{U}([\bm w]) = [ {}^t\!U^{-1} {\bm w} ]$ where $U$ is an invertible matrix defined as follows
\[
 U = \left(
 \begin{matrix}
  [a] & 0 & 0 \\
  [b] & 1 & 0 \\
  [c] & 0 & 1
 \end{matrix} \right)
\]
If ${\bm v} \cdot {\bm v} = 0$, then $\delta([\bm v]) = \delta([(1,0,0)] - 1$. If not so, $\delta([\bm v]) = \delta([(1,0,0)])$.
In the same way, if $[b]/[c]$ is invertible, then $\delta([\bm v])$ is $\delta([(0,1,0)])/\delta([(0,0,1)])$ or $\delta([(1,0,0)]/\delta([(0,0,1)]) - 1$.
Therefore, for all the vertex $[{\bm v}]$, 
\[ \delta([\bm v]) = \left\{ \begin{array}{ll}
    \delta([(1,0,0)]) & ({\bm v} \cdot {\bm v} \neq 0) \\
    \delta([(1,0,0)]) - 1 & (otherwise) 
    \end{array}\right.
\]

\end{proof}

\begin{Lem}\label{Lem:diameter}
Let $E$ be a Euclidean domain and $I$ be an ideal of $E$. The diameter of $\B(E/I)$ is 2.
\end{Lem}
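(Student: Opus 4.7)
The plan is to establish both that $\B(E/I)$ contains a non-adjacent pair (so the diameter is at least $2$) and that every non-adjacent pair admits a common neighbour (so the diameter is at most $2$). Since $E$ is Euclidean it is a PID, so I will write $I = (g)$. For the lower bound, whenever $I$ is a proper ideal the vertices $[(1,0,0)]$ and $[(1,1,0)]$ are distinct and have inner product $1 \neq 0$, so they are non-adjacent.

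For the upper bound, take distinct non-adjacent vertices $[\bm v], [\bm w]$ and lift them arbitrarily to representatives $\bm v, \bm w \in E^3$. I first show that any such lifts are automatically linearly independent over $\mathrm{Frac}(E)$. If $\alpha \bm v = \beta \bm w$ in $E^3$ with nonzero $\alpha, \beta \in E$ of coprime gcd, then $\beta$ divides each coordinate of $\alpha \bm v$, whence $\bm v = \beta \bm u_0$ and $\bm w = -\alpha \bm u_0$ for some $\bm u_0 \in E^3$. Validity of $[\bm v]$ and $[\bm w]$ forces $\gcd(\alpha, g) = \gcd(\beta, g) = 1$, so $[\alpha]$ and $[\beta]$ are units in $E/I$, yielding $[\bm v] = [\bm u_0] = [\bm w]$, a contradiction.

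Given this independence, $\bm c := \bm v \times \bm w$ is nonzero in $E^3$. Let $d$ be the gcd of its three components in $E$ (well-defined since $E$ is a UFD) and set $\bm u := \bm c / d \in E^3$, whose coordinates are coprime. Then $[\bm u]$ is a valid vertex of $\B(E/I)$: any annihilator $[r]$ would satisfy $g \mid r u_i$ for each $i$, hence $g \mid r \gcd(u_1, u_2, u_3) = r$, forcing $[r] = 0$. The universal identity $(\bm v \times \bm w) \cdot \bm v = 0$ in $E$ gives $d(\bm u \cdot \bm v) = 0$, and because $E$ is a domain and $d \neq 0$ we deduce $\bm u \cdot \bm v = 0$ in $E$, hence in $E/I$; similarly $\bm u \cdot \bm w = 0$. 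Finally $[\bm u] \neq [\bm v]$, for otherwise $\bm u = k \bm v$ with $k$ a unit and then $\bm u \cdot \bm w = 0$ would force $\bm v \cdot \bm w = 0$, contradicting non-adjacency; the symmetric argument rules out $[\bm u] = [\bm w]$.

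The main obstacle is that one cannot simply take the cross product inside $E/I$: for instance with $E = \Z$ and $I = (4)$, the valid vertices $(1,2,0)$ and $(1,0,2)$ have cross product reducing to $(0,2,2)$, which is annihilated by $2$ and so is not itself a valid vertex. Lifting to $E$ and dividing by the coordinate gcd restores primitivity, and the step establishing that such lifts are always $\mathrm{Frac}(E)$-independent is what uses the Euclidean (hence UFD) structure of $E$ in an essential way.
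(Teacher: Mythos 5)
Your proposal is correct and rests on the same central device as the paper's proof: the cross product of the two representatives supplies a candidate common neighbour, and dividing by the gcd of its coordinates repairs the case where that candidate fails to be a valid vertex. Where you genuinely differ is in the non-degeneracy step. The paper works inside $E/I$: it supposes $\bm v \times \bm w = \bm 0$ there, extracts the relations $[v_i]\cdot\bm w = [w_i]\cdot\bm v$, and invokes a B\'ezout identity $av_1+bv_2+cv_3+de=1$ to express one representative as a scalar multiple of the other, concluding $[\bm v]=[\bm w]$ (without, as written, checking that the scalar is a unit). You instead lift to $E^3$ and prove the lifts are linearly independent over $\mathrm{Frac}(E)$ by a divisibility argument, so the cross product is nonzero already in $E^3$, and you normalize by the gcd in $E$ before reducing mod $I$. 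Your route buys several things the paper leaves implicit: it covers the case where the cross product is nonzero in $E^3$ yet reduces to $\bm 0$ in $(E/I)^3$ (your $d$ then absorbs the generator $g$, and the primitive vector $\bm u$ still survives as a valid vertex); it verifies that the common neighbour is distinct from both endpoints, using non-adjacency, which the paper never checks; and it supplies the lower bound $D \geq 2$. One shared caveat: both arguments tacitly assume $I \neq (0)$ --- your claim that validity of $[\bm v]$ forces $\gcd(\beta,g)=1$ fails when $g=0$, and the paper's coprimality-with-$e$ step fails likewise --- which is harmless for the intended applications to $\Z_{p^k}$ but worth noting since the lemma is stated for an arbitrary ideal.
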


\begin{proof}
For any two distinct vertices represented by $\bm v = ([v_1],[v_2],[v_3])$ and $\bm w=([w_1],[w_2],[w_3])$, consider the cross product $\bm v \times \bm w$. 
If $\bm v \times \bm w = \bm 0$, then $[v_i] \cdot \bm w = [w_i] \cdot \bm v$ for $i=1,2,3$. 
There exists $e \in E$ such that $I = (e)$ because any Euclidean domain is a principal ideal domain.
If $\gcd(v_1,v_2,v_3,e)$ is not a unity, where $\gcd$ is a greatest common divisor, 
there exists $e' \neq 1$ in $E$ such that $e=de'$. 
$\bm v$ is not a representative because $[e'] \cdot \bm v = \bm 0$. 
This is a contradiction. Therefore $d$ is a unity, namely $v_1$ and $v_2$, $v_3$, $e$ are coprime.
Then there exist $a, b, c, d \in E$ such that $ a v_1 + b v_2 + c v_3 + d e = 1$ in $E$.
Seeing this formula in $E/I$, we get $[a] [v_1] + [b] [v_2] + [c] [v_3] = [1]$.
\[ \bm v = [1] \cdot \bm v = ( [a] [v_1] + [b] [v_2] + [c] [v_3] ) \bm v = ( [a] [w_1] + [b] [w_2] + [c] [w_3] ) \bm w\]
means $[\bm v] = [\bm w]$, which is a contradiction to that two vertices are distinct, then $\bm v \times \bm w \neq \bm 0$. 
If $\bm v \times \bm w$ is a representative of vertex, $[\bm v \times \bm w]$ is adjacent to $[\bm v]$ and $[\bm w]$.
If $\bm v \times \bm w = ([k_1],[k_2],[k_3])$ is not a representative of vertex,
$\bm v \times \bm w = [\gcd(k_1,k_2,k_3)] \cdot \bm u$ and $\bm u$ is a representative of vertex. $[\bm u]$ is adjacent to $[\bm v]$ and $[\bm w]$.
\end{proof}

\begin{Th}\label{Th:main}
The following equations hold.
\begin{enumerate}
\item $ |\B(\Z_{p^k})| = p^{2k}+p^{2k-1}+p^{2k-2} $
\item $ \Delta(\B(\Z_{p^k})) = p^k + p^{k-1} $
\item $ D(\Z_{p^k}) = 2$
\end{enumerate}
\end{Th}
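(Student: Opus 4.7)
The plan is to handle the three parts in the order (3), (1), (2), since each is either a direct appeal to a previous lemma or a short counting argument, and (2) benefits from the counting done for (1).

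Part (3) is immediate: apply Lemma~\ref{Lem:diameter} with $E = \Z$ and $I = (p^k)$, so that $\Z/I = \Z_{p^k}$.

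The engine behind parts (1) and (2) is a double-counting that exploits the fact that $\Z_{p^k}$ is a local ring whose nonunits are precisely the multiples of $p$. My first step is to characterize the good vectors, i.e.\ those ${\bm v}\in\Z_{p^k}^3$ that actually represent a vertex: ${\bm v}$ is annihilated by some nonzero $r\in\Z_{p^k}$ if and only if every coordinate lies in $(p)$, since if some coordinate is a unit then $r{\bm v}=0$ forces $r=0$, and conversely if all coordinates are in $(p)$ then $p^{k-1}{\bm v}=0$. Counting the complement in $\Z_{p^k}^3$ gives $p^{3k}-p^{3k-3}$ good vectors. Since the unit group $\Z_{p^k}^*$ has order $p^k-p^{k-1}$ and acts freely (by the same local-ring observation: a good vector cannot be fixed nontrivially), dividing yields $|\B(\Z_{p^k})| = (p^{3k}-p^{3k-3})/(p^k-p^{k-1})$, which factors as $p^{2k-2}(p^2+p+1)$.

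For part (2), I first reduce to computing $\delta([(1,0,0)])$. Lemma~\ref{Lem:regular} applies with $u=p$ and shows that every vertex has degree $\delta([(1,0,0)])$ or $\delta([(1,0,0)])-1$, with equality iff ${\bm v}\cdot{\bm v}\neq 0$. Since $(1,0,0)\cdot(1,0,0)=1\neq 0$, this vertex itself realizes the maximum, so $\Delta(\B(\Z_{p^k}))=\delta([(1,0,0)])$. Its neighbors are the equivalence classes of good vectors of the form $(0,b,c)$; such a vector is good iff at least one of $b,c$ is a unit, which by the same local argument as before gives $p^{2k}-p^{2k-2}$ good pairs. Dividing by $|\Z_{p^k}^*|=p^k-p^{k-1}$ gives $p^{k-1}(p+1)=p^k+p^{k-1}$, as claimed.

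The only step that is not purely mechanical is the local-ring characterization of good vectors and of the freeness of the unit action on them; once that is in hand, parts (1) and (2) are parallel counts in three and two coordinates respectively. I anticipate no serious obstacle, just care to ensure the factorizations in the final arithmetic match the stated closed forms.
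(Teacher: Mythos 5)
Your proposal is correct and follows essentially the same route as the paper: diameter via Lemma~\ref{Lem:diameter}, order by counting representative vectors (those with at least one unit coordinate) and dividing by $|\Z_{p^k}^*| = p^k - p^{k-1}$, and degree by reducing to $\delta([(1,0,0)])$ via Lemma~\ref{Lem:regular} and performing the analogous two-coordinate count. You merely make explicit a few points the paper leaves implicit (the characterization of good vectors in the local ring $\Z_{p^k}$ and the freeness of the unit action), which is a welcome but not substantively different refinement.
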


\begin{proof}
It is straightforward to show the formula of the order of $\B(\Z_{p^k})$.

\begin{eqnarray*}
|\B(\Z_{p^k})| & = & \frac{|\Z_{p^k}|^3 - |\{ mp | 0 \leq m < p^{k-1} \}|^3}{|\Z_{p^k}|-|\{ mp | 0 \leq m < p^{k-1} \}|} \\ 
& = & \frac{(p^k)^3 - (p^{k-1})^3}{p^k-p^{k-1}} = p^{2k}+p^{2k-1}+p^{2k-2}
\end{eqnarray*}

Using Lemma \ref{Lem:regular}, it is only enough to show that the degree of the vertex represented by $(1,0,0)$ satisfy the formula of the degree of $\B(\Z_{p^k})$. 
\begin{eqnarray*}
\Delta(\B(\Z_p^k)) & = & \delta([(1,0,0)]) = \frac{|\Z_{p^k}|^2 - |\{ mp | 0 \leq m < k \}|^2 }{|\Z_{p^k}|-|\{ mp | 0 \leq m < k \}|} \\
& = & \frac{(p^k)^2 - (p^{k-1})^2}{p^k-p^{k-1}} = p^k + p^{k-1}
\end{eqnarray*}

Using Lemma \ref{Lem:diameter}, we get $D(\B(\Z_p^k)) = 2$

\end{proof}

We search new records of the degree/diameter problem among graphs by generalized Brown's construction of $\Z_n$ where $2 \leq n \leq 10000$.
Using the above theorem, $\B(\Z_{17^2})$ has degree $306$ and diameter $2$ and $88723$ vertices.
It is a new record of $(306,2)$ of the degree/diameter problem because it cannot be obtained from ordinary Brown's construction.
The power of a prime less than $305=306 - 1$ is $293^1$ and the graph $\B(\Z_{293})$ obtained from ordinary Brown's construction of $293^1$ has $294=293+1$ degree and $86143=293^2+293+1$ vertices.
The old record of $306=294+12$ is $86156=86143+12$ obtained from $\B(\Z_{293})$ by duplicating vertices.
In the same way, the graph obtained from $\B(\Z_{17^2})$ by duplicating any one vertex, whose order is $88724$, is a new record of $(307,2)$ because the power of a prime less than $306=307-1$ is $293^1$.

\section*{Acknowledgements}
We wish to thank Ryosuke Mizuno, Nobuhito Tamaki, Masahito Hasegawa,Shin-ya Katsumata, and Sakie Suzuki.

\bibliographystyle{plain}
\bibliography{ref}

\end{document}